\documentclass[11pt]{amsart}

\usepackage{amsmath, amsthm, amssymb}

\setlength{\baselineskip}{16.0pt}    

\setlength{\parskip}{3pt plus 2pt} \setlength{\parindent}{20pt}
\setlength{\oddsidemargin}{0.5cm}
\setlength{\evensidemargin}{0.5cm}
\setlength{\marginparsep}{0.75cm}
\setlength{\marginparwidth}{2.5cm}
\setlength{\marginparpush}{1.0cm} \setlength{\textwidth}{150mm}

\newtheorem{thm}{Theorem}
\newtheorem{prop}[thm]{Proposition}
\newtheorem{lem}[thm]{Lemma}

\def\Re{\text{\rm Re\,}}

\begin{document}

\title[Squares of positive $(p,p)$-forms]
{Squares of positive $(p,p)$-forms}

\def\nl{\newline\phantom{aa}\hskip 8pt}

\author[Z. B\l ocki]{Zbigniew B\l ocki}
\address{Instytut Matematyki, Uniwersytet Jagiello\'nski, \L ojasiewicza 6,
30-348 Krak\'ow, Poland}
\email{Zbigniew.Blocki@im.uj.edu.pl, umblocki@cyf-kr.edu.pl}
\author[Sz. Pli\'s]{Szymon Pli\'s}
\address{Instytut Matematyki, Politechnika Krakowska, Warszawska 24, 
31-155 Krak\'ow, Poland} 
\email{splis@pk.edu.pl}
\thanks{The second named author was partially supported by the NCN grant
2011/01/D/ST1/04192.}

\begin{abstract}
We show that if $\alpha$ is a positive $(2,2)$-form then so is 
$\alpha^2$. We also prove that this is no longer true for forms
of higher degree.
\end{abstract}

\makeatletter
\@namedef{subjclassname@2010}{%
  \textup{2010} Mathematics Subject Classification}
\makeatother



\maketitle

\section*{Introduction}

Recall that a $(p,p)$-form $\alpha$ in $\mathbb C^n$ is called 
{\it positive} (we write $\alpha\geq 0$) if for $(1,0)$-forms 
$\gamma_1,\dots,\gamma_{n-p}$ one has 
  $$\alpha\wedge i\gamma_1\wedge\bar\gamma_1\wedge\dots\wedge
       i\gamma_{n-p}\wedge\bar\gamma_{n-p}\geq 0.$$
This is a natural geometric condition, positive $(p,p)$-forms
are for example characterized by the following property: for every 
$p$-dimensional subspace $V$ and a test function $\varphi\geq 0$ one has
  $$\int_V\varphi\,\alpha\geq 0.$$

It is well known that positive forms are real (that is $\bar\alpha=\alpha$) 
and if $\beta$ is a (1,1)-form then
\begin{equation}\label{cond}
\alpha\geq 0,\ \beta\geq 0\ \Rightarrow\ \alpha\wedge\beta\geq 0.
\end{equation}
It was shown by Harvey and Knapp \cite{HK} (and independently by Bedford 
and Taylor \cite{BT}) that \eqref{cond} does not hold for all $(p,p)$
and $(q,q)$-forms $\alpha$ and $\beta$, respectively. We refer to
Demailly's book \cite{De}, pp. 129-132, for a nice and simple 
introduction to positive forms.

Dinew \cite{D} gave an explicit example of $(2,2)$-forms 
$\alpha,\beta$ in $\mathbb C^4$ such that $\alpha\geq 0$, $\beta\geq 0$ 
but $\alpha\wedge\beta<0$. We will recall it in the next section.
The aim of this note is to show the following, somewhat surprising result:

\begin{thm}\label{thm1}
Assume that $\alpha$ is a positive $(2,2)$-form. Then $\alpha^2$ is also 
positive.
\end{thm}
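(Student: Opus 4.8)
The plan is to reduce the statement to a finite-dimensional question on $\mathbb{C}^4$ and then to a clean inequality for real matrices. First I would use the subspace characterization quoted in the introduction: a $(4,4)$-form is positive if and only if its restriction to every $4$-dimensional complex subspace $V$ is a nonnegative volume form. Since restriction commutes with the wedge product, $(\alpha^2)|_V=(\alpha|_V)^2$, and since the restriction of a positive form to a subspace is again positive (testing $\alpha|_V$ on a $2$-dimensional $S\subset V$ is the same as testing $\alpha$ on $S\subset\mathbb{C}^n$), it suffices to treat the case $n=4$. There $\alpha^2$ has top bidegree $(4,4)$, so positivity of $\alpha^2$ is simply the statement that its single coefficient relative to the volume form is $\ge 0$.

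Next I would pass to linear algebra on $N:=\Lambda^{2,0}\cong\mathbb{C}^6$. A real $(2,2)$-form corresponds to a Hermitian form $G$ on $N$, namely $G(\eta)=\alpha\wedge\eta\wedge\bar\eta/\mathrm{vol}$, with associated Hermitian operator $B$; the wedge product $(\eta,\eta')\mapsto\eta\wedge\eta'$ defines a nondegenerate complex-symmetric form $q$ on $N$ whose isotropic vectors are exactly the decomposable $(2,0)$-forms. Thus $\alpha\ge 0$ is equivalent to $G\ge 0$ on the null cone $\{q=0\}$, while a direct computation expresses the coefficient of $\alpha^2$, up to a fixed positive constant, as a quadratic invariant of $B$ determined by $q$. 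The key simplification is that the conjugate-linear map determined by $q$ and the Hermitian metric is an antiunitary involution, so there is a basis simultaneously orthonormal for the metric and for $q$. In such a basis $q(v)=\sum v_i^2$, the null cone is the ``round'' cone, the invariant equals $\mathrm{tr}(B\bar B)$, and writing $B=S+iA$ with $S$ real symmetric and $A$ real antisymmetric it becomes $\|S\|_F^2-\|A\|_F^2$.

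The main obstacle is the resulting inequality, and it is genuinely necessary to use positivity of $G$ on the \emph{whole} null cone: one checks that imposing $G\ge 0$ only on finitely many isotropic subspaces already permits the invariant to be negative. I would exploit the full condition as follows. A null vector is $v=x+iy$ with $x,y\in\mathbb{R}^6$ orthonormal after scaling, and since $x^TSx+y^TSy$ and $x^TAy$ depend only on the oriented plane $P=\langle x,y\rangle$, the condition $G(v)\ge 0$ reads $\mathrm{tr}(S|_P)\ge 2\,|x^TAy|$ for every orthonormal pair. I would then put $A$ into antisymmetric normal form, with $2\times 2$ blocks of sizes $\lambda_1,\lambda_2,\lambda_3\ge 0$, so that $\|A\|_F^2=2\sum_k\lambda_k^2$. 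Applying the plane condition to the three coordinate planes of these blocks gives $S_{2k-1,2k-1}+S_{2k,2k}\ge 2\lambda_k\ge 0$, whence $S_{2k-1,2k-1}^2+S_{2k,2k}^2\ge 2\lambda_k^2$; summing and using $\|S\|_F^2\ge\sum_i S_{ii}^2$ yields $\|S\|_F^2\ge\|A\|_F^2$, i.e. the invariant is $\ge 0$ and hence $\alpha^2\ge 0$. The part I expect to demand the most care is the bookkeeping of the second paragraph --- verifying the antiunitary-involution normalization and that the coefficient of $\alpha^2$ is exactly this invariant with a positive constant --- since once the normalization is pinned down the final inequality is elementary.
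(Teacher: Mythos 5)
Your proposal is correct, and it reaches Theorem \ref{thm1} by a genuinely different (and shorter) route than the paper, so it is worth comparing the two. Both arguments share the same starting point: after the reduction to $n=4$, positivity of $\alpha$ is equivalent to positive semidefiniteness of the Hermitian form $\eta\mapsto\alpha\wedge\eta\wedge\bar\eta$ on the cone of decomposable elements of $\Lambda^{2,0}\cong\mathbb C^6$, which for $p=2$ (and only for $p=2$) coincides with the full quadric $\{\eta\wedge\eta=0\}$; this is exactly Dinew's criterion, Theorem \ref{din}. The paper then stays in the hyperbolic model $z_1z_6+z_2z_5+z_3z_4=0$ of this quadric, uses Lemma \ref{lem} to annihilate a row and column of the $6\times6$ matrix, reduces to the $4\times4$ statement of Theorem \ref{thm4}, and proves that by a delicate direct estimation (the test point $\zeta=-r\bar d/|d|$, the optimization in $r$, and the final elementary fact). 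You instead diagonalize the wedge pairing, passing to a basis $E_1,\dots,E_6$ of $\Lambda^{2,0}$ with $E_j\wedge E_k=\delta_{jk}\,\theta$; such a basis exists since the pairing is a nondegenerate complex symmetric form, and your antiunitary-involution argument is a valid way to produce one (note it uses $C^2=+\mathrm{id}$, which holds because $\varepsilon_J\varepsilon_{J'}=(-1)^p=+1$ for $p=2$, cf.\ \eqref{eps}). In that basis the quadric becomes the round cone $\sum_jz_j^2=0$, whose points are $x+iy$ with $x,y$ orthogonal of equal length; writing the Gram matrix as $M=S+iA$ the hypothesis becomes $\operatorname{tr}(S|_P)\geq 2|x^TAy|$ for all $2$-planes $P$, and the coefficient of $\alpha^2$ is exactly $\sum_{j,k}M_{jk}^2=\|S\|_F^2-\|A\|_F^2$ (I verified this identity; with the natural normalization the constant is $1$, and it reproduces $\alpha_a^2=2(3+|a|^2)\,dV$ for the forms of Proposition \ref{prop}). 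Your endgame --- rotate $A$ into antisymmetric normal form by a real orthogonal map (which preserves the round quadric, hence the hypothesis), test on the three block planes, and use $s^2+t^2\geq\tfrac12(s+t)^2$ together with $\|S\|_F^2\geq\sum_iS_{ii}^2$ --- is complete and elementary; the orthogonal symmetry of the round model is precisely what replaces the paper's technical Section \ref{s2}. What each approach buys: the paper's computation yields the refined $4\times4$ inequality \eqref{aa2}, which is sharper than what positivity of $\alpha^2$ requires, while your argument avoids Lemma \ref{lem} entirely, is conceptually transparent, and isolates where $p=2$ enters --- for $p\geq3$ decomposable forms satisfy further Pl\"ucker relations beyond one quadric (and for odd $p$ the pairing is skew, so the real structure disappears), consistent with the failure described in Theorem \ref{thmp}.
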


It turns out that this phenomenon holds only for $(2,2)$-forms:

\begin{thm}\label{thmp}
For every $p\geq 3$ there exists a $(p,p)$-form $\alpha$ in $\mathbb C^{2p}$
such that $\alpha\geq 0$ but $\alpha^2<0$.
\end{thm}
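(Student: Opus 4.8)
The plan is to reduce Theorem \ref{thmp} to Dinew's example in $\mathbb C^4$ by a padding construction. Let $\alpha_0,\beta_0$ be the positive $(2,2)$-forms in $\mathbb C^4$ with $\alpha_0\wedge\beta_0<0$ from \cite{D}. Since $\alpha_0\wedge\beta_0$ is a $(4,4)$-form on $\mathbb C^4$, it is a top form, so $\alpha_0\wedge\beta_0=-c\,\omega_1\wedge\omega_2\wedge\omega_3\wedge\omega_4$ for some constant $c>0$, where $\omega_j:=i\,dz_j\wedge d\bar z_j$. The idea is to spread $\alpha_0$ and $\beta_0$ into two disjoint blocks of the extra variables so that the square of the combined form picks out precisely the negative cross term $\alpha_0\wedge\beta_0$.

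Concretely, I would work in $\mathbb C^{2p}$ with coordinates $z_1,\dots,z_{2p}$, place $\alpha_0,\beta_0$ on the first four, and split the remaining $2(p-2)$ variables into two blocks $\{z_5,\dots,z_{p+2}\}$ and $\{z_{p+3},\dots,z_{2p}\}$, each of size $p-2\geq 1$. Set $\eta:=\omega_5\wedge\cdots\wedge\omega_{p+2}$ and $\eta':=\omega_{p+3}\wedge\cdots\wedge\omega_{2p}$, and define the candidate $(p,p)$-form $\alpha:=\alpha_0\wedge\eta+\beta_0\wedge\eta'$. First I would check $\alpha\geq 0$: since $\eta$ and $\eta'$ are wedges of positive $(1,1)$-forms, iterating \eqref{cond} shows $\alpha_0\wedge\eta\geq 0$ and $\beta_0\wedge\eta'\geq 0$, hence their sum is positive.

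It then remains to expand $\alpha^2$. All the forms involved have even total degree, so they commute and $\alpha^2=\alpha_0^2\wedge\eta^2+2\,\alpha_0\wedge\beta_0\wedge\eta\wedge\eta'+\beta_0^2\wedge(\eta')^2$. The key point is that $\eta^2=(\eta')^2=0$, since each repeats some factor $dz_j$; thus only the cross term survives, giving $\alpha^2=2\,\alpha_0\wedge\beta_0\wedge\eta\wedge\eta'$. Because $\eta\wedge\eta'=\omega_5\wedge\cdots\wedge\omega_{2p}$ and $\alpha_0\wedge\beta_0=-c\,\omega_1\wedge\cdots\wedge\omega_4$, we obtain $\alpha^2=-2c\,\omega_1\wedge\cdots\wedge\omega_{2p}<0$, as this is a negative multiple of the volume form.

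The construction requires $p-2\geq 1$, i.e.\ $p\geq 3$, which matches Theorem \ref{thm1} exactly: for $p=2$ there are no spare variables and the padding degenerates. I do not expect a serious obstacle; the only points needing care are the iterated use of \eqref{cond} to establish positivity of $\alpha$, and the bookkeeping that isolates the negative cross term---in particular verifying $\eta^2=(\eta')^2=0$ and that the even-degree forms commute, so that no sign issues arise in expanding $\alpha^2$.
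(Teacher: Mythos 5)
Your construction is correct, and it takes a genuinely different route from the paper. The paper proves Theorem \ref{thmp} by first developing a counterpart of Dinew's criterion for $(p,p)$-forms in $\mathbb C^{2p}$ (the matrix representation \eqref{mat}, the squaring formula \eqref{sq}, and positivity on the image of the Pl\"ucker embedding \eqref{pl}), then building an explicit $(3,3)$-form in $\mathbb C^6$ supported on a single Pl\"ucker quadric relation, exactly as in Proposition \ref{prop}, and finally padding up from $p=3$ to general $p$ by wedging with powers of a K\"ahler form in the extra variables. You instead use Dinew's $(2,2)$-pair $\alpha_0,\beta_0$ with $\alpha_0\wedge\beta_0<0$ (Proposition \ref{prop} with $a=\pm2$ gives $\alpha_2\wedge\alpha_{-2}=-2\,dV$) as a black box, and the identity $\alpha^2=2\,\alpha_0\wedge\beta_0\wedge\eta\wedge\eta'$, valid because $\eta^2=(\eta')^2=0$ and even-degree forms commute, does the rest. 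This is shorter and more conceptual: it shows that the failure of positivity of squares for $p\geq 3$ is a direct corollary of the failure of \eqref{cond} for pairs of $(2,2)$-forms, and it explains the threshold $p\geq 3$ as the need for two nonempty disjoint blocks of spare variables. What the paper's longer route buys is the Section \ref{s3} machinery itself (the $N\times N$ matrix criterion and formula \eqref{sq}), which is of independent interest beyond the counterexample.

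One small point you should make explicit: iterating \eqref{cond} to get $\alpha_0\wedge\eta\geq 0$ presupposes that $\alpha_0$, originally a positive form on $\mathbb C^4$, is still positive when regarded (via pullback under the projection $\mathbb C^{2p}\to\mathbb C^4$) as a $(2,2)$-form on $\mathbb C^{2p}$. This is standard and easy from the characterization of positivity recalled in the introduction: restricting the pulled-back form to a $2$-dimensional subspace $V\subset\mathbb C^{2p}$ factors through the projection of $V$ to $\mathbb C^4$, which either has dimension $<2$ (giving zero) or is a $2$-dimensional subspace on which $\alpha_0$ restricts nonnegatively, and a $\mathbb C$-linear isomorphism pulls back nonnegative volume forms to nonnegative volume forms. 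With that sentence added, your proof is complete.
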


We do not know if similar results hold for higher powers of positive forms.

The paper is organized as follows: in Section \ref{s1} we present Dinew's 
criterion for positivity of (2,2)-forms in $\mathbb C^4$ which reduces 
the problem to a certain property of $6\times 6$ matrices. Further 
simplification reduces the problem to $4\times 4$ matrices. We then solve 
it in Section \ref{s2}. This is the most technical part of the paper.
Higher degree forms are analyzed in Section \ref{s3} where a counterpart 
of Dinew's criterion is showed and Theorem \ref{thmp} is proved.

\section{Dinew's Criterion}\label{s1}

Without loss of generality we may assume that $n=4$. Let $\omega_1,\dots,
\omega_4$ be a basis of $(\mathbb C^4)^\ast$ such that
  $$dV:=i\omega_1\wedge\bar\omega_1\wedge\dots\wedge i\omega_4\wedge
     \bar\omega_4=\omega_1\wedge\dots\wedge\omega_4\wedge
     \bar\omega_1\wedge\dots\wedge\bar\omega_4>0.$$
Set
  $$\Omega_1:=\omega_1\wedge\omega_2,\ \ \ \Omega_2:=\omega_1\wedge\omega_3,\ \ \ 
    \Omega_3:=\omega_1\wedge\omega_4,$$
  $$\Omega_4:=\omega_2\wedge\omega_3,\ \ \ \Omega_5:=-\omega_2\wedge\omega_4,\ \ \
   \Omega_6:=\omega_3\wedge\omega_4.$$
   Then 
  $$\Omega_j\wedge\Omega_k=
    \begin{cases}\omega_1\wedge\dots\wedge\omega_4,&\text{if }k=7-j\\
     0,&\text{otherwise.}\end{cases}$$
Every $(2,2)$-form $\alpha$ we can associate with a $6\times 6$-matrix 
$A=(a_{jk})$ by
  $$\alpha=\sum_{j,k}a_{jk}\,\Omega_j\wedge\bar\Omega_k.$$
For
  $$\beta=\sum_{j,k}b_{jk}\,\Omega_j\wedge\bar\Omega_k$$
  we have
\begin{equation}\label{prod}
\alpha\wedge\beta=\sum_{j,k}a_{jk}\,b_{7-j,7-k}\,dV.
\end{equation}

The key will be the following criterion from \cite{D}:

\begin{thm}\label{din}
$\alpha\geq 0$ iff $\bar zAz^T\geq 0$ for all 
$z\in\mathbb C^6$ with $z_1z_6+z_2z_5+z_3z_4=0$.
\end{thm}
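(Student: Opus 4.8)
The plan is to unwind the definition of positivity into a statement about decomposable $(2,0)$-forms and then translate it, via the product formula \eqref{prod}, into a Hermitian condition on $A$. First I would observe that for $(1,0)$-forms $\gamma_1,\gamma_2$, writing $\sigma=\gamma_1\wedge\gamma_2$, a direct sign computation gives $i\gamma_1\wedge\bar\gamma_1\wedge i\gamma_2\wedge\bar\gamma_2=\sigma\wedge\bar\sigma$ (the factor $i^2=-1$ is cancelled by the single transposition needed to bring $\gamma_1\wedge\gamma_2$ and $\bar\gamma_1\wedge\bar\gamma_2$ together). Hence, by the definition of positivity, $\alpha\ge0$ is equivalent to $\alpha\wedge\sigma\wedge\bar\sigma\ge0$ for every \emph{decomposable} $(2,0)$-form $\sigma$.

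Next I would expand $\sigma$ in the basis, $\sigma=\sum_j z_j\Omega_j$, so that $\sigma\wedge\bar\sigma=\sum_{l,m}z_l\bar z_m\,\Omega_l\wedge\bar\Omega_m$; that is, $\sigma\wedge\bar\sigma$ corresponds to the matrix with entries $z_l\bar z_m$. Applying \eqref{prod} with $\beta=\sigma\wedge\bar\sigma$ then yields $\alpha\wedge\sigma\wedge\bar\sigma=\big(\sum_{j,k}a_{jk}\,z_{7-j}\,\bar z_{7-k}\big)\,dV$. Introducing $w_p:=z_{7-p}$ this becomes $\big(\sum_{j,k}a_{jk}\,w_j\bar w_k\big)\,dV$, and since positive forms are real the matrix $A$ is Hermitian, so this scalar is real and equals $(\bar wAw^{T})\,dV$.

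Then I would identify the decomposable forms. A $(2,0)$-form on a $4$-dimensional space is decomposable if and only if $\sigma\wedge\sigma=0$ (the Plücker relation for $\mathrm{Gr}(2,4)$). Using the multiplication table $\Omega_j\wedge\Omega_k=\delta_{k,7-j}\,\omega_1\wedge\dots\wedge\omega_4$ one computes $\sigma\wedge\sigma=2(z_1z_6+z_2z_5+z_3z_4)\,\omega_1\wedge\dots\wedge\omega_4$; the sign chosen in $\Omega_5=-\omega_2\wedge\omega_4$ is exactly what makes the relation come out in this symmetric shape. Thus $\sigma$ is decomposable precisely when $z_1z_6+z_2z_5+z_3z_4=0$. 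Finally, since the index reversal $z\mapsto w$ with $w_p=z_{7-p}$ maps this quadric bijectively onto itself, letting $\sigma$ range over all decomposable forms is the same as letting $w$ range over all vectors with $w_1w_6+w_2w_5+w_3w_4=0$; renaming $w$ as $z$ gives the asserted criterion.

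The routine parts are the two sign computations. The conceptual point I would be most careful about is the double use of the index reversal $j\mapsto 7-j$ — once inside the product formula \eqref{prod} and once in recognizing that the constraint quadric is preserved — together with the Hermitian symmetry of $A$ needed to rewrite the real scalar $\sum_{j,k}a_{jk}\,w_j\bar w_k$ as $\bar wAw^{T}$. The only genuinely external ingredient is the classical fact that vanishing of $\sigma\wedge\sigma$ characterizes decomposability of a $2$-form in dimension $4$.
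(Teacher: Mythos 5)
Your argument is in substance the paper's own: both proofs identify the test forms $i\gamma_1\wedge\bar\gamma_1\wedge i\gamma_2\wedge\bar\gamma_2$ with points of the Pl\"ucker quadric and read positivity of $\alpha$ as positivity of the associated sesquilinear form there. The one place you genuinely add something is the identification of that quadric: the paper's sketch stops at the assertion that the image of the explicit map $(b,c)\mapsto(b_1c_2-b_2c_1,\dots,b_3c_4-b_4c_3)$ is precisely $\{z_1z_6+z_2z_5+z_3z_4=0\}$, whereas you derive it from the classical criterion that a $2$-form $\sigma$ on a $4$-dimensional space is decomposable iff $\sigma\wedge\sigma=0$, combined with the computation $\sigma\wedge\sigma=2(z_1z_6+z_2z_5+z_3z_4)\,\omega_1\wedge\dots\wedge\omega_4$ from the multiplication table of the $\Omega_j$. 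That is exactly the right way to fill in the step the paper leaves unproved, and it also explains the sign convention in $\Omega_5$.

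However, one step is wrong as written: Hermitian symmetry of $A$ does \emph{not} let you rewrite $\sum_{j,k}a_{jk}w_j\bar w_k$ as $\bar wAw^T=\sum_{j,k}a_{jk}\bar w_jw_k$. These two numbers are the values of the Hermitian form at $\bar w$ and at $w$ respectively; for Hermitian $A$ both are real, but they differ in general. In the $2\times 2$ model take $a_{11}=a_{22}=0$, $a_{12}=i$, $a_{21}=-i$ and $w=(1,i)$: then $\sum a_{jk}\bar w_jw_k=-2$ while $\sum a_{jk}w_j\bar w_k=2$. What is true, with no symmetry assumption on $A$ at all, is the identity $\sum_{j,k}a_{jk}w_j\bar w_k=\bar vAv^T$ with $v=\bar w$. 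The repair is an observation of exactly the kind you already make for the index reversal $j\mapsto 7-j$: the quadric $z_1z_6+z_2z_5+z_3z_4=0$ is also invariant under conjugation $z\mapsto\bar z$, so as $\sigma$ ranges over all decomposable $(2,0)$-forms the vectors $v=\bar w$ sweep out the whole quadric, and the two universally quantified conditions coincide. This fix has the further advantage of not invoking Hermitian symmetry at all, which matters for the ``if'' direction of the theorem: there positivity (hence reality) of $\alpha$ is the conclusion, not a hypothesis, so you are not entitled to assume $A$ Hermitian at that point of your argument.
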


\begin{proof}[Sketch of proof]
For $\gamma_1=b_1\omega_1+\dots+b_4\omega_4$, 
$\gamma_2=c_1\omega_1+\dots+c_4\omega_4$ we have
\begin{align*}
i\gamma_1\wedge\bar\gamma_1\wedge i\gamma_2\wedge\bar\gamma_2
  &=\sum_{j,k,l,m=1}^4b_j\bar b_kc_l\bar c_m\,\omega_j\wedge\omega_l
     \wedge\bar\omega_k\wedge\bar\omega_m\\
  &=\sum_{\substack{j<l\\ k<m}}(b_jc_l-b_lc_j)\overline{(b_kc_m-b_mc_k)}
     \,\omega_j\wedge\omega_l\wedge\bar\omega_k\wedge\bar\omega_m.
\end{align*}
It is now enough to show that the image of the mapping
\begin{align*}
\mathbb C^8\ni(b_1,\dots,b_4,c_1,\dots,c_4)\longmapsto&\\
  (b_1c_2-b_2c_1,b_1c_3-b_3c_1,b_1&c_4-b_4c_1,b_2c_3-b_3c_2,
      -b_2c_4+b_4c_2,b_3c_4-b_4c_3)\in\mathbb C^6
\end{align*}
is precisely $\{z\in\mathbb C^6:z_1z_6+z_2z_5+z_3z_4=0\}$.
\end{proof}

\pagebreak

Using Theorem \ref{din} and an idea from \cite{D} we can show:

\begin{prop}\label{prop}
The form
  $$\alpha_a=\sum_{j=1}^6\Omega_j\wedge\bar\Omega_j
     +a\,\Omega_1\wedge\bar\Omega_6+\bar a\,\Omega_6\wedge\bar\Omega_1$$
is positive if and only if $|a|\leq 2$.
\end{prop}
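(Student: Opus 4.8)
The plan is to apply Dinew's criterion (Theorem \ref{din}) directly. I first read off the Hermitian form attached to $\alpha_a$: since the associated matrix is the identity perturbed only by $a$ in position $(1,6)$ and $\bar a$ in position $(6,1)$, the form is
  $$Q(z) = \bar zAz^T = \sum_{j=1}^6 |z_j|^2 + 2\Re(a\bar z_1 z_6).$$
By Theorem \ref{din}, positivity of $\alpha_a$ is equivalent to $Q(z)\geq 0$ for every $z\in\mathbb C^6$ lying on the quadric $z_1z_6+z_2z_5+z_3z_4=0$. The point of the computation is that the constraint buys extra positivity: dropping it would force the $2\times2$ block in the $(z_1,z_6)$ variables to be positive, i.e. $|a|\leq 1$, whereas the constraint should raise the threshold to $|a|\leq 2$.

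The core idea is to use the constraint to eliminate the \emph{auxiliary} coordinates $z_2,z_3,z_4,z_5$, which enter $Q$ only through $|z_2|^2+\dots+|z_5|^2$ but are tied to $z_1,z_6$ by $z_2z_5+z_3z_4=-z_1z_6$. The key lemma I would isolate is that, for fixed $c\in\mathbb C$,
  $$\min\{|z_2|^2+|z_3|^2+|z_4|^2+|z_5|^2 : z_2z_5+z_3z_4=c\}=2|c|,$$
which follows from AM--GM ($|z_2|^2+|z_5|^2\geq 2|z_2z_5|$, likewise for $z_3,z_4$) together with the triangle inequality, and is attained at $z_3=z_4=0$, $|z_2|=|z_5|=\sqrt{|c|}$.

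Applying this with $c=-z_1z_6$ and then choosing the worst phase of $a\bar z_1z_6$ yields
  $$Q(z)\geq |z_1|^2+|z_6|^2+2|z_1||z_6|+2\Re(a\bar z_1z_6)\geq |z_1|^2+|z_6|^2+(2-2|a|)\,|z_1||z_6|.$$
Writing $r=|z_1|$, $s=|z_6|$, the whole problem collapses to deciding when the real form $r^2+s^2+(2-2|a|)rs$ is nonnegative on the quadrant $r,s\geq 0$. One more application of $r^2+s^2\geq 2rs$ shows this holds exactly when $|a|\leq 2$, proving sufficiency. For necessity I would exhibit a sharp point on the quadric: taking $|z_1|=|z_6|=|z_2|=|z_5|=1$, $z_3=z_4=0$, with phases arranged so that $z_2z_5=-z_1z_6$ and $a\bar z_1z_6=-|a|$, gives $Q=4-2|a|$, which is negative precisely when $|a|>2$.

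I expect the main obstacle to be not the auxiliary-coordinate lemma itself, which is elementary, but the verification that the two separate optimizations---minimizing $|z_2|^2+\dots+|z_5|^2$ over the constraint and minimizing the phase of $2\Re(a\bar z_1z_6)$---can be realized \emph{simultaneously} by a single $z$ on the quadric. This is exactly what makes the lower bound attainable and pins the threshold at $|a|=2$ rather than leaving it as a merely sufficient bound.
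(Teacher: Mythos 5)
Your proof is correct and follows essentially the same route as the paper: Dinew's criterion, then the AM--GM bounds $|z_2|^2+|z_5|^2\geq 2|z_2z_5|$, $|z_3|^2+|z_4|^2\geq 2|z_3z_4|$ combined with the triangle inequality and the constraint $z_2z_5+z_3z_4=-z_1z_6$ for sufficiency, and an explicit witness on the quadric (with $z_3=z_4=0$ and phases chosen so that $a\bar z_1z_6=-|a|$) for necessity. The only differences are cosmetic: you package the estimate as a minimization lemma and normalize the witness to $|z_1|=|z_6|=1$, whereas the paper takes $|z_1|=|z_6|=\sqrt{|a|}$.
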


\begin{proof}
We have
  $$\bar zAz^T=|z|^2+2\Re(a\bar z_1z_6)
     \geq 2|z_1z_6|+2|z_2z_5+z_3z_4|+2\Re(a\bar z_1z_6).$$
If $z_1z_6+z_2z_5+z_3z_4=0$ and $|a|\leq 2$ we clearly get
$\bar zAz^T\geq 0$. If we take $z_1,z_6$ with 
$\bar z_1z_6=-\bar a$, $|z_1|=|z_6|$ and $z_2,\dots,z_5$ with
$z_2z_5+z_3z_4=-z_1z_6$ then $\bar zAz^T=2|a|(2-|a|)$.
\end{proof}

By \eqref{prod}
  $$\alpha_a\wedge\alpha_b=2(3+\Re(a\bar b))dV.$$
Therefore $\alpha_2,\alpha_{-2}$ are real and positive 
but $\alpha_2\wedge\alpha_{-2}<0$.

In view of Theorem \ref{din} we see that Theorem \ref{thm1}
is equivalent to the following:

\begin{thm}\label{thm2}
Let $A=(a_{jk})\in\mathbb C^{6\times 6}$ be hermitian and 
such that $\bar zAz^T\geq 0$ for $z\in\mathbb C^{6}$ with
$z_1z_6+z_2z_5+z_3z_4=0$. Then
\begin{equation*}
\sum_{j,k=1}^{6}a_{jk}\,a_{7-j,7-k}\geq 0.
\end{equation*}
\end{thm}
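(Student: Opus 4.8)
The plan is to diagonalize the Hermitian matrix $A$ and read off the target quantity $\Phi(A,A):=\sum_{j,k}a_{jk}a_{7-j,7-k}$ from the rank-one pieces. Write $A=\sum_{i=1}^{6}\lambda_i v_iv_i^{\ast}$ with $\lambda_i\in\mathbb R$ and $v_1,\dots,v_6$ an orthonormal eigenbasis. Introducing the nondegenerate complex symmetric bilinear form $B(v,w):=\sum_j v_jw_{7-j}$ (so that $B(v,v)=2(v_1v_6+v_2v_5+v_3v_4)$ and the constraint defining $C:=\{z:z_1z_6+z_2z_5+z_3z_4=0\}$ reads $B(z,z)=0$), a direct expansion gives $\Phi(v v^{\ast},ww^{\ast})=\bigl|B(v,w)\bigr|^2$. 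Hence by bilinearity
\begin{equation*}
\Phi(A,A)=\sum_{i,i'=1}^{6}\lambda_i\lambda_{i'}\,\bigl|B(v_i,v_{i'})\bigr|^2 .
\end{equation*}
The decisive feature is that every factor $|B(v_i,v_{i'})|^2$ is nonnegative, so the \emph{only} possible obstruction to $\Phi(A,A)\geq0$ comes from the signs of the eigenvalues $\lambda_i$.

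The key structural step I would prove next is that the hypothesis forces $A$ to have \emph{at most one} negative eigenvalue. Indeed, let $N$ be the span of the eigenvectors with $\lambda_i<0$; for any nonzero $w\in N$ one has $\bar wAw^T<0$, so $w\notin C$, i.e. $B(w,w)\neq0$. Thus $B$ is anisotropic on $N$. But over $\mathbb C$ a quadratic form in at least two variables always has a nonzero isotropic vector, so $\dim N\leq1$. If $N=\{0\}$, then $A\geq0$ and all $\lambda_i\geq0$, whence $\Phi(A,A)\geq0$ term by term and we are done.

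The remaining case --- exactly one negative eigenvalue $\lambda_1<0$ --- is the crux. Writing $A=-|\lambda_1|v_1v_1^{\ast}+A_{+}$ with $A_{+}=\sum_{i\geq2}\lambda_iv_iv_i^{\ast}\geq0$, and setting $\mu:=B(v_1,v_1)$, $\sigma:=\sum_{i\geq2}\lambda_i|B(v_1,v_i)|^2\geq0$, $\tau:=\sum_{i,i'\geq2}\lambda_i\lambda_{i'}|B(v_i,v_{i'})|^2\geq0$, bilinearity yields
\begin{equation*}
\Phi(A,A)=|\mu|^2\lambda_1^2-2|\lambda_1|\,\sigma+\tau .
\end{equation*}
Since $v_1\notin C$ we have $\mu\neq0$, so this is a genuine quadratic in $t=|\lambda_1|$ with positive leading coefficient; it is nonnegative provided $\sigma^2\leq|\mu|^2\tau$, an inequality of Cauchy--Schwarz type relating the mixed terms $B(v_1,v_i)$ to the Gram data $B(v_i,v_{i'})$. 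The plan is to derive precisely this estimate from the \emph{quantitative} positivity of $A$ on $C$: testing $\bar zAz^T\geq0$ on vectors $z=cv_1+w$ with $w\perp v_1$ subject to the quadratic relation $\mu c^2+2B(v_1,w)c+B(w,w)=0$ controls $|\lambda_1|$ against $A_{+}$, and I expect this to be exactly the point where the reduction to $4\times4$ matrices announced in the introduction enters, the $5$-dimensional positive part collapsing to a small block that interacts nontrivially with $B(v_1,\cdot)$.

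The main obstacle, therefore, is this last inequality: converting the pointwise cone condition into the global bound $\sigma^2\leq|\mu|^2\tau$ (or, if the clean symmetric form of it should fail, into nonnegativity of the quadratic at the specific value $t=|\lambda_1|$). The structural reduction above is soft and robust, but the finishing estimate is genuinely analytic and is where I anticipate the technical computation --- the dimensional reduction followed by an explicit verification on the reduced matrix --- to be unavoidable.
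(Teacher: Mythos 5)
Your soft steps are all correct: the identity $\Phi(vv^{\ast},ww^{\ast})=|B(v,w)|^2$ follows by direct expansion, the isotropy argument (every complex quadratic form in two or more variables has a nonzero zero) does force the negative eigenspace of $A$ to be at most one-dimensional, and the case $A\geq 0$ is then finished termwise. But the proof stops exactly where the theorem starts. In the only nontrivial case --- exactly one negative eigenvalue --- you reduce everything to the inequality $\sigma^2\leq|\mu|^2\tau$, or at least to $|\mu|^2\lambda_1^2-2|\lambda_1|\sigma+\tau\geq 0$ at the specific value $t=|\lambda_1|$, and this you do not prove; you explicitly defer it (``the plan is to derive precisely this estimate\dots'', ``the finishing estimate is genuinely analytic\dots''). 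Note that this is not a Cauchy--Schwarz formality: the standard discriminant argument would require $\Phi\bigl(tv_1v_1^{\ast}+A_+,\,tv_1v_1^{\ast}+A_+\bigr)\geq 0$ for \emph{all} real $t$, but for $t<0$ the matrix $tv_1v_1^{\ast}+A_+$ leaves the cone of positive semidefinite matrices, and nonnegativity of $\Phi$ on matrices that are merely positive on the quadric cone is exactly the statement being proved (indeed Dinew's example, Proposition \ref{prop} with $\Phi(\alpha_2,\alpha_{-2})<0$, lives in your ``one negative eigenvalue'' case and shows $\Phi$ is genuinely indefinite there). It is not even clear that the symmetric inequality $\sigma^2\leq|\mu|^2\tau$ is true; only nonnegativity at $t=|\lambda_1|$, as constrained by the hypothesis, is actually needed or claimed. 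So there is a genuine gap, and it sits precisely on the theorem's entire technical content.

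For comparison, the paper takes a different and complete route: no diagonalization, but a normalization (Lemma \ref{lem}) choosing the basis so that $a_{26}=a_{36}=a_{46}=a_{56}=0$ and $a_{62}=a_{63}=a_{64}=a_{65}=0$, which splits off the $(1,6)$-block and reduces the claim to the $4\times 4$ statement (Theorem \ref{thm4}); there the cone $z_1z_4+z_2z_3=0$ is parametrized by $z=(1,\zeta,w,-\zeta w)$, the hypothesis becomes the explicit conditions \eqref{war1}--\eqref{war2}, and the conclusion comes from testing \eqref{war2} at $\zeta=-r\bar d/|d|$ with the optimized radius $r=(\lambda_1\lambda_3/(\lambda_2\lambda_4))^{1/4}$, followed by an elementary scalar inequality. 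Your eigenvalue framework could in principle be completed by a test-vector computation of this kind (your $z=cv_1+w$ suggestion points in that direction), but until such a computation is carried out your argument is a reformulation of the problem, not a proof.
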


We will need the following technical reduction:

\begin{lem}\label{lem} 
For every $(2,2)$-form $\alpha$ in $\mathbb C^4$ we can find
a basis $\omega_1,\dots,\omega_4$ of $(\mathbb C^4)^\ast$ such that
\begin{equation}\label{cc1}
\alpha\wedge\omega_1\wedge\omega_2\wedge
     \bar\omega_1\wedge\bar\omega_j
    =\alpha\wedge\omega_1\wedge\omega_2\wedge
         \bar\omega_2\wedge\bar\omega_j=0
\end{equation}
for $j=3,4$.
\end{lem}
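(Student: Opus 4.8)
The plan is to translate the four geometric identities \eqref{cc1} into linear conditions on the matrix $A=(a_{jk})$ attached to $\alpha$, and then to build the basis by a two-step normalization. Using $\omega_1\wedge\omega_2=\Omega_1$ together with $\bar\omega_1\wedge\bar\omega_3=\bar\Omega_2$, $\bar\omega_1\wedge\bar\omega_4=\bar\Omega_3$, $\bar\omega_2\wedge\bar\omega_3=\bar\Omega_4$, $\bar\omega_2\wedge\bar\omega_4=-\bar\Omega_5$, and the rule $\Omega_j\wedge\Omega_k=\delta_{k,7-j}\,\omega_1\wedge\dots\wedge\omega_4$, I would check that each product in \eqref{cc1} equals, up to sign, $a_{6k}\,dV$ for a single index $k$. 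Concretely, \eqref{cc1} is equivalent to $a_{62}=a_{63}=a_{64}=a_{65}=0$. It is convenient to read $a_{jk}=H(\Omega_j,\Omega_k)$, where $H$ is the sesquilinear form on $\Lambda^2(\mathbb C^4)^\ast$ represented by $\alpha$; the four conditions then say that $\Omega_6=\omega_3\wedge\omega_4$ is $H$-orthogonal to $\Omega_2,\dots,\Omega_5$, each of which shares one of the factors $\omega_3,\omega_4$ with $\Omega_6$.

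First I would secure a nondegeneracy: I want a decomposable two-form $\Psi=\omega_3\wedge\omega_4$ with $a_{66}=H(\Psi,\Psi)\neq 0$. Granting this, fix such $\omega_3,\omega_4$ and complete them to a basis by arbitrary $\omega_1,\omega_2$. I then replace $\omega_1,\omega_2$ by $\omega_1+v_1,\ \omega_2+v_2$ with $v_1,v_2\in\mathrm{span}(\omega_3,\omega_4)$; this leaves $\Omega_6$ unchanged and keeps a basis, while it shifts each of $\Omega_2,\dots,\Omega_5$ by an explicit multiple of $\Omega_6$, the four entries of $v_1,v_2$ producing four independent such shifts. The new coefficients become $a_{6k}-\overline{t_k}\,a_{66}$ with $t_k$ ranging freely over $\mathbb C$, so choosing $t_k=\overline{a_{6k}/a_{66}}$ kills all four. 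This is exactly where $a_{66}\neq 0$ is used, and the step reduces to four decoupled scalar equations.

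The main obstacle is the case where no decomposable $\Psi$ satisfies $H(\Psi,\Psi)\neq 0$. I would dispose of it by polarization along pencils of decomposables sharing a common factor: if $\Psi_1=\eta\wedge\zeta_1$ and $\Psi_2=\eta\wedge\zeta_2$, then $\Psi_1+t\Psi_2=\eta\wedge(\zeta_1+t\zeta_2)$ is decomposable for every $t$, so $H(\Psi_1+t\Psi_2,\Psi_1+t\Psi_2)=0$ for all $t$; taking $t=0,1,i$ forces $H(\Psi_1,\Psi_2)=0$. Since in any basis each of $\Omega_2,\dots,\Omega_5$ shares a factor with $\Omega_6$, this yields $a_{62}=a_{63}=a_{64}=a_{65}=0$ automatically, so an arbitrary basis already satisfies \eqref{cc1}. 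The two cases together prove the lemma, and the delicate point is precisely justifying this dichotomy and checking that the degenerate alternative is self-resolving.

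When $\alpha$ is real (the situation relevant to Theorem \ref{thm2}), $H$ is Hermitian and I would instead merge both steps into one extremal argument: fix an auxiliary Hermitian metric on $(\mathbb C^4)^\ast$, maximize the real quotient $H(\Psi,\Psi)/\langle\Psi,\Psi\rangle$ over the compact Grassmannian of decomposable lines, and take $\omega_3,\omega_4$ an orthonormal basis of the maximizing plane and $\omega_1,\omega_2$ orthonormal in its complement. The tangent space to the decomposable cone at $\Omega_6$ is exactly $\mathrm{span}(\Omega_2,\dots,\Omega_6)$, so the criticality condition gives $H(\Omega_6,\Omega_j)=\lambda\langle\Omega_6,\Omega_j\rangle=0$ for $j=2,\dots,5$ in one stroke. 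I expect the dichotomy above, rather than this variational shortcut, to be the robust route, since it also covers non-real $\alpha$ as stated in the lemma.
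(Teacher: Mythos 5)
Your reduction of \eqref{cc1} to the four coefficient equations $a_{62}=a_{63}=a_{64}=a_{65}=0$ is correct, and your polarization argument for the degenerate case is sound. The gap is in the main (nondegenerate) step, and it comes from a duality flip: the coefficients $a_{jk}$ of $\alpha=\sum a_{jk}\Omega_j\wedge\bar\Omega_k$ are \emph{not} the values of a basis-independent sesquilinear form at $(\Omega_j,\Omega_k)$. The invariant pairing attached to $\alpha$ is $(\Psi,\Phi)\mapsto\alpha\wedge\Psi\wedge\bar\Phi$, and the multiplication table $\Omega_j\wedge\Omega_k=\delta_{k,7-j}\,\omega_1\wedge\dots\wedge\omega_4$ gives $a_{jk}\,dV=\alpha\wedge\Omega_{7-j}\wedge\bar\Omega_{7-k}$: each coefficient pairs the \emph{complementary} bivectors. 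Hence \eqref{cc1} says that $\Omega_1=\omega_1\wedge\omega_2$ (not $\Omega_6$) is orthogonal to $\Omega_2,\dots,\Omega_5$ for this pairing, and $a_{66}\neq 0$ means $\alpha\wedge\omega_1\wedge\omega_2\wedge\bar\omega_1\wedge\bar\omega_2\neq 0$ --- a condition on $\omega_1,\omega_2$, not on $\omega_3,\omega_4$. Your key computation inherits the flip: replacing $\omega_1,\omega_2$ by $\omega_1+v_1,\omega_2+v_2$ fixes $\Omega_6$ but changes $\Omega_1$, which sits in one slot of all four conditions, so the new coefficients $a'_{6k}=\alpha\wedge\Omega'_1\wedge\bar\Omega'_{7-k}/dV$ pick up contributions from many entries of $A$; they are not $a_{6k}-\overline{t_k}\,a_{66}$, and the system is neither decoupled nor linear. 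Concretely, take $\alpha=\Omega_2\wedge\bar\Omega_2$ (so $a_{22}=1$, all other entries vanish, and \eqref{cc1} holds in the given basis) and $\omega_1'=\omega_1+t\omega_4$, the other $\omega_j$ unchanged: your formula predicts that all $a'_{6k}$ stay $0$, but in fact $\Omega'_1=\Omega_1+t\Omega_5$, $\Omega'_5=\Omega_5$, so
\begin{equation*}
a'_{62}\,dV=\alpha\wedge\Omega'_1\wedge\bar\Omega'_5
=t\,\alpha\wedge\Omega_5\wedge\bar\Omega_5=t\,a_{22}\,dV\neq 0 ,
\end{equation*}
and \eqref{cc1} fails in the new basis.

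The repair is to swap the roles of the two pairs: demand $\alpha\wedge\omega_1\wedge\omega_2\wedge\bar\omega_1\wedge\bar\omega_2\neq 0$ (which is exactly $a_{66}\neq 0$, i.e.\ the paper's condition \eqref{cc}) and perturb $\omega_3,\omega_4$ by vectors $u_3,u_4\in\operatorname{span}(\omega_1,\omega_2)$. This leaves $\Omega_1$ untouched while shifting each $\Omega_m$, $m=2,\dots,5$, by an independent multiple of $\Omega_1$, so the four conditions become decoupled linear equations of the form $a_{6k}+\bar\gamma\,a_{66}=0$, solvable since $a_{66}\neq 0$. That corrected argument is, in substance, the paper's proof, which packages it as linear algebra: the space $V_2$ of $\omega$ satisfying the two antilinear conditions has dimension at least $2$, and \eqref{cc} forces $V_1\cap V_2=\{0\}$ for $V_1=\operatorname{span}(\omega_1,\omega_2)$, so $\omega_3,\omega_4$ can be taken as a basis of $V_2$. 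One genuinely valuable point in your proposal survives the correction: the paper simply asserts that \eqref{cc} is attainable when $\alpha\neq 0$, whereas your dichotomy, transplanted to the invariant pairing, handles the remaining alternative cleanly (if $\alpha\wedge\Psi\wedge\bar\Psi=0$ for every decomposable $\Psi$, then polarizing along pencils $\eta\wedge(\zeta_1+t\zeta_2)$ gives $\alpha\wedge\Psi_1\wedge\bar\Psi_2=0$ whenever $\Psi_1,\Psi_2$ share a factor, so \emph{every} basis satisfies \eqref{cc1}). Your variational coda has the same flip problem: criticality of a maximizing decomposable $\Psi_0$ yields orthogonality of $\Psi_0$ itself to the tangent directions of the Pl\"ucker cone, so the maximizer must be named $\omega_1\wedge\omega_2$, not $\omega_3\wedge\omega_4$.
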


\begin{proof}
We may assume that $\alpha\neq 0$, then we can find 
$\omega_1,\omega_2\in(\mathbb C^4)^\ast$ such that
\begin{equation}\label{cc}
\alpha\wedge\omega_1\wedge\omega_2\wedge\bar\omega_1\wedge
\bar\omega_2=\alpha\wedge i\omega_1\wedge\bar\omega_1\wedge
i\omega_2\wedge\bar\omega_2\neq 0.
\end{equation}
By $V_1$ denote the subspace spanned by $\omega_1$, $\omega_2$ 
and by $V_2$ the subspace of all $\omega\in(\mathbb C^4)^\ast$ 
satisfying \eqref{cc1} with $\omega_j$ replaced by $\omega$.
Then $\dim V_1=2$, $\dim V_2\geq 2$ and by \eqref{cc} 
we infer $V_1\cap V_2=\{0\}$, hence 
$(\mathbb C^4)^\ast=V_1\oplus V_2$.
\end{proof}

\section{Proof of Theorem \ref{thm2}}\label{s2}

By Lemma \ref{lem} we may assume that the matrix from Theorem 
\ref{thm2} satisfies
  $$a_{26}=a_{36}=a_{46}=a_{56}=0$$
and
  $$a_{62}=a_{63}=a_{64}=a_{65}=0.$$
Then
  $$\sum_{j,k=1}^{6}a_{jk}\,a_{7-j,7-k}=
     \sum_{j,k=2}^{5}a_{jk}\,a_{7-j,7-k}
     +2(a_{11}a_{66}+|a_{16}|^2).$$
Therefore Theorem \ref{thm2} is in fact equivalent to the 
following result:

\begin{thm}\label{thm4}
Let $A=(a_{jk})\in\mathbb C^{4\times 4}$ be hermitian and 
such that $\bar zAz^T\geq 0$ for $z\in\mathbb C^{4}$ with
$z_1z_4+z_2z_3=0$. Then
\begin{equation}\label{aa}
\sum_{j,k=1}^{4}a_{jk}\,a_{5-j,5-k}\geq 0.
\end{equation}
\end{thm}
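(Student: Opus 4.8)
The plan is to make both sides completely explicit and then reduce the hypothesis to a family of $2\times2$ positivity conditions. First I would expand the target quantity using only that $A=(a_{jk})$ is hermitian: grouping the sixteen terms of $\sum_{j,k}a_{jk}a_{5-j,5-k}$ according to the involution $j\mapsto5-j$ gives
\begin{equation*}
\sum_{j,k=1}^{4}a_{jk}\,a_{5-j,5-k}=2a_{11}a_{44}+2a_{22}a_{33}+2|a_{14}|^2+2|a_{23}|^2+4\Re(a_{12}\overline{a_{34}})+4\Re(a_{13}\overline{a_{24}}).
\end{equation*}
Thus the diagonal products and the terms $|a_{14}|^2,|a_{23}|^2$ are harmless, and the entire difficulty is to control the two cross terms $\Re(a_{12}\overline{a_{34}})$ and $\Re(a_{13}\overline{a_{24}})$ when they are negative.

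Next I would unpack the hypothesis. The quadric $\{z_1z_4+z_2z_3=0\}$ is exactly the cone of ``decomposable'' vectors $z=(u_1v_1,u_1v_2,-u_2v_1,u_2v_2)$ with $u,v\in\mathbb C^2$, so the hypothesis asserts $\bar zAz^T\ge0$ for all such $z$. Splitting the index set as $\{1,2\}\cup\{3,4\}$ and writing $A=\bigl(\begin{smallmatrix}B&C\\ C^{*}&D\end{smallmatrix}\bigr)$ in $2\times2$ blocks, freezing $v$ and varying $u$ shows that this is equivalent to $\bigl(\begin{smallmatrix}f(v)&h(v)\\ \overline{h(v)}&g(v)\end{smallmatrix}\bigr)\ge0$ for every $v$, i.e. to the pointwise inequality
\begin{equation*}
f(v)g(v)\ge|h(v)|^2\qquad(v\in\mathbb C^2),\qquad f,g\ge0,
\end{equation*}
where $f(v)=\bar vBv^{T}$, $g(v)=\overline{v'}D\,v'^{T}$, $h(v)=\bar vC\,v'^{T}$ with $v'=(-v_1,v_2)$. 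Evaluating at $v=e_1,e_2$, together with $\det B\ge0$ and $\det D\ge0$, yields the four ``outer'' conditions $a_{11}a_{33}\ge|a_{13}|^2$, $a_{22}a_{44}\ge|a_{24}|^2$, $a_{11}a_{22}\ge|a_{12}|^2$, $a_{33}a_{44}\ge|a_{34}|^2$.

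The core estimate is then
\begin{equation*}
4\Re(a_{12}\overline{a_{34}})+4\Re(a_{13}\overline{a_{24}})\le2a_{11}a_{44}+2a_{22}a_{33}+2|a_{14}|^2+2|a_{23}|^2,
\end{equation*}
and here I expect the main obstacle. The four outer conditions give, via Cauchy--Schwarz and the arithmetic--geometric inequality, $2|a_{12}||a_{34}|\le a_{11}a_{44}+a_{22}a_{33}$ and likewise $2|a_{13}||a_{24}|\le a_{11}a_{44}+a_{22}a_{33}$, but these bound each cross term separately and are a factor of two too weak to absorb both at once. Moreover any argument that merely integrates the hypothesis against a nonnegative weight is doomed: such averaging is symmetric in the two tensor factors and cancels precisely the cross terms one needs, and it can never manufacture the ``wrong sign'' quadratic contributions present in the target. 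The deficit must therefore be closed using the genuinely nonlinear inequality $f(v)g(v)\ge|h(v)|^2$ at carefully chosen $v$ — exactly the directions that switch on the entries $a_{14},a_{23}$.

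To carry this out I would first normalise: by an orthonormal change of basis in one $\mathbb C^2$ factor (diagonalising $B$) and by the diagonal phase freedom (which preserves both the hypothesis and the target), arrange that the two cross terms are real and nonnegative. I would then study
\begin{equation*}
\det M(w):=f(1,w)\,g(1,w)-|h(1,w)|^2\ge0\qquad(w\in\mathbb C)
\end{equation*}
as a genuine nonnegative real quartic, exploiting the relations its coefficients are forced to satisfy; equivalently, one may invoke that every positive linear map between $2\times2$ matrix algebras is decomposable (St\o rmer--Woronowicz) and estimate the associated trace $\sum_{j,k}a_{jk}a_{5-j,5-k}$ directly. I would expect a short case distinction, according to the sign configuration of the normalised cross terms and to which outer minors are degenerate, to complete the argument, the genuinely delicate case being the one in which all four outer minors are simultaneously tight.
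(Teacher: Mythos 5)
Your reduction is exactly the one the paper uses, and it is correct as far as it goes: the expansion of $\sum_{j,k}a_{jk}a_{5-j,5-k}$, the identification of the quadric with the cone of vectors $(u_1v_1,u_1v_2,-u_2v_1,u_2v_2)$, and the reformulation of the hypothesis as the four outer minor inequalities plus the pointwise condition $f(v)g(v)\ge|h(v)|^2$ are precisely conditions \eqref{war1} and \eqref{war2} of the paper (in its notation $a=a_{12}$, $b=a_{13}$, $\alpha=a_{14}$, $\beta=a_{23}$, $c=-a_{24}$, $d=-a_{34}$, $\lambda_j=a_{jj}$); your remark that the outer conditions settle the case of a favorable cross term but are a factor of two too weak in general is likewise the paper's reduction to the case $\Re(a\bar d)>0$, $\Re(b\bar c)>0$. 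The genuine gap is that your proposal stops exactly where the theorem begins. The entire content of the result is how to extract the missing factor of two from \eqref{war2}, and on this point you offer only intentions: ``study $\det M(w)\ge0$ as a nonnegative quartic, exploiting the relations its coefficients are forced to satisfy'' and ``I would expect a short case distinction\dots'' specify no test point, no intermediate inequality, and no mechanism. For comparison, the paper's step here is short but genuinely non-obvious: it proves the stronger bound \eqref{aa2} by evaluating \eqref{war2} at the single point $\zeta=-r\bar d/|d|$ with $r=(\lambda_1\lambda_3/\lambda_2\lambda_4)^{1/4}$; an AM--GM absorption turns the right-hand side into the single square $\bigl(\sqrt{\lambda_1\lambda_4}+\sqrt{\lambda_2\lambda_3}-2\sqrt{\Re(a\bar d)}\bigr)^2r^2$, the triangle inequality bounds the left-hand side from below, and together these give
\begin{equation*}
2\sqrt{\Re(a\bar d)}+2\sqrt{\Re(b\bar c)}\le\sqrt{\lambda_1\lambda_4}+\sqrt{\lambda_2\lambda_3}+|\alpha|+|\beta|,
\end{equation*}
after which \eqref{aa2} follows from the elementary fact that $0\le a_1\le x$, $0\le a_2\le x$ and $a_1+a_2\le x+y$ imply $a_1^2+a_2^2\le x^2+y^2$. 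None of these three ingredients (the strengthened statement, the choice of evaluation point, the final lemma) is present in, or forced by, your outline.

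Moreover, both of your proposed fallback devices fail as stated. First, the phase normalization: the diagonal unitary conjugations preserving the hypothesis are exactly those $D=\mathrm{diag}(d_1,\dots,d_4)$ with $d_1d_4=d_2d_3$ (otherwise the quadric is not preserved), and for these one computes $a_{12}\overline{a_{34}}\mapsto (d_1d_4)\overline{(d_2d_3)}\,a_{12}\overline{a_{34}}=a_{12}\overline{a_{34}}$, so the phases of the cross terms are invariants and cannot be rotated away. Second, St\o rmer--Woronowicz does not reduce the claim to a ``direct estimate''. Writing $E=\mathrm{diag}(1,1,-1,1)$ and $B=EAE$, decomposability gives $B=P+Q^\Gamma$ with $P,Q\ge0$, and \eqref{aa} becomes $T(B,B)\ge0$ for the symmetric pairing $T(X,Y)=\sum_{j,k}\sigma_j\sigma_kX_{jk}Y_{5-j,5-k}$, $\sigma=(1,-1,-1,1)$. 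One does have $T(P,P')\ge0$ and $T(P^\Gamma,P'^\Gamma)\ge0$ for positive semi-definite arguments (for rank one these are the perfect squares $|2p_1p_4-2p_2p_3|^2$), but the mixed term can be strictly negative: for $P=pp^\ast$, $Q=qq^\ast$ with $p=e_1\otimes e_1-e_2\otimes e_2$ and $q=e_1\otimes e_2+e_2\otimes e_1$ one gets $T(P,P)=T(Q^\Gamma,Q^\Gamma)=4$ while $2T(P,Q^\Gamma)=-4$. So after decomposing you would still need a Cauchy--Schwarz type inequality such as $2|T(P,Q^\Gamma)|\le T(P,P)+T(Q^\Gamma,Q^\Gamma)$ for an indefinite pairing on these two cones, which is an inequality of essentially the same depth as the theorem itself; decomposability alone does not produce \eqref{aa}.
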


\begin{proof}
Write
  $$A=\left(\begin{matrix}
      \lambda_1&a&b&\alpha\\ 
      \bar a&\lambda_2&\beta&-c\\ 
      \bar b&\bar\beta&\lambda_3&-d\\ 
      \bar\alpha&-\bar c&-\bar d&\lambda_4
      \end{matrix}\right).$$
It satisfies the assumption of the theorem if and only if for
every $z\in\mathbb C^4$ of the form $z=(1,\zeta,w,-\zeta w)$
one has $\bar zAz^T\geq 0$. We can then compute
\begin{align*}
\bar zAz^T=&\lambda_1+2\Re(a\zeta)+\lambda_2|\zeta|^2\\
   &+2\Re[\big(b-\alpha\zeta+\beta\bar\zeta+c|\zeta|^2\big)w]\\
   &+\big(\lambda_3+2\Re(d\zeta)+\lambda_4|\zeta|^2\big)|w|^2.
\end{align*}
Therefore $A$ satisfies the assumption iff $\lambda_j\geq 0$,
\begin{equation}\label{war1}
|a|\leq\sqrt{\lambda_1\lambda_2},\ \ \ 
|b|\leq\sqrt{\lambda_1\lambda_3},\ \ \
|c|\leq\sqrt{\lambda_2\lambda_4},\ \ \
|d|\leq\sqrt{\lambda_3\lambda_4},
\end{equation}
and for every $\zeta\in\mathbb C$ 
\begin{equation}\label{war2}
\left|b-\alpha\zeta+\beta\bar\zeta+c|\zeta|^2\right|^2
\leq\left(\lambda_1+2\Re(a\zeta)+\lambda_2|\zeta|^2\right)
\left(\lambda_3+2\Re(d\zeta)+\lambda_4|\zeta|^2\right).
\end{equation}
In our case \eqref{aa} is equivalent to
  $$4\Re(a\bar d+b\bar c)\leq 
     2(\lambda_1\lambda_4+\lambda_2\lambda_3)
     +2(|\alpha|^2+|\beta|^2).$$
We will in fact prove something more:
\begin{equation}\label{aa2}
4\Re(a\bar d+b\bar c)\leq(\sqrt{\lambda_1\lambda_4}
   +\sqrt{\lambda_2\lambda_3})^2+(|\alpha|+|\beta|)^2.
\end{equation}

Without loss of generality we may assume that
  $$\Re(a\bar d)>0,\ \ \ \Re(b\bar c)>0,$$
for if for example $\Re(a\bar d)\leq 0$ then by \eqref{war1}
  $$4\Re(a\bar d+b\bar c)\leq 4\Re(b\bar c)
     \leq 4\sqrt{\lambda_1\lambda_2\lambda_3\lambda_4}
     \leq(\sqrt{\lambda_1\lambda_4}
        +\sqrt{\lambda_2\lambda_3})^2.$$
Set $u:=\Re(a\bar d)$ and $\zeta:=-r\bar d/|d|$, where $r>0$
will be determined later. Then the right-hand side of 
\eqref{war2} we can write as follows
\begin{align*}
\big(\lambda_1-&\frac{2ur}{|d|}+\lambda_2r^2\big)
  \big(\lambda_3-2r|d|+\lambda_4r^2\big)\\
 &=\left(\lambda_1+\lambda_2r^2\right)
    \left(\lambda_3+\lambda_4r^2\right)+4ur^2
    -2r\left[\lambda_1|d|+\lambda_3\frac{u}{|d|}
      +r^2\left(\lambda_2|d|+\lambda_4\frac{u}{|d|}\right)\right]\\
 &\leq\left(\lambda_1+\lambda_2r^2\right)
     \left(\lambda_3+\lambda_4r^2\right)+4ur^2
     -4r^2(\sqrt{\lambda_1\lambda_4}+\sqrt{\lambda_2\lambda_3})
            \sqrt u\\
  &=\big(\sqrt{\lambda_1\lambda_4}+\sqrt{\lambda_2\lambda_3}
            -2\sqrt u\big)^2r^2+\big(\sqrt{\lambda_1\lambda_3}
               -\sqrt{\lambda_2\lambda_4}\,r^2\big)^2.
\end{align*}
For $r=(\frac{\lambda_1\lambda_3}{\lambda_2\lambda_4})^{1/4}$
from \eqref{war2} we thus obtain
  $$\left|\frac br+\frac{\bar d}{|d|}\alpha-\frac{d}{|d|}\beta+cr\right|
     \leq\sqrt{\lambda_1\lambda_4}+\sqrt{\lambda_2\lambda_3}
          -2\sqrt u.$$
We also have
  $$\left|\frac br+\frac{\bar d}{|d|}(\alpha-\bar\beta)+cr\right|
    \geq \left|\frac br+cr\right|-|\alpha|-|\beta|\geq
    2\sqrt{\Re(b\bar c)}-|\alpha|-|\beta|$$
and therefore
  $$2\sqrt{\Re(a\bar d)}+2\sqrt{\Re(b\bar c)}
     \leq\sqrt{\lambda_1\lambda_4}+\sqrt{\lambda_2\lambda_3}
       +|\alpha|+|\beta|.$$
To get \eqref{aa2} we can now use the following fact: if $0\leq a_1\leq x$,
$0\leq a_2\leq x$ and $a_1+a_2\leq x+y$ then $a_1^2+a_2^2\leq
x^2+y^2$. This can be easily verified: if $a_1+a_2\leq x$ then
$a_1^2+a_2^2\leq x^2$ and if $a_1+a_2\geq x$ then
  $$x^2+y^2\geq x^2+(a_1+a_2-x)^2=a_1^2+a_2^2+2x(x-a_1)(x-a_2).$$
\end{proof}

\section{$(p,p)$-forms in $\mathbb C^{2p}$}\label{s3}

In $\mathbb C^{2p}$ we choose the positive volume form
  $$dV:=idz_1\wedge d\bar z_1\wedge\dots\wedge idz_{2p}
     \wedge d\bar z_{2p}=dz_1\wedge\dots\wedge dz_{2p}
     \wedge d\bar z_1\wedge\dots\wedge d\bar z_{2p}.$$
By $\mathcal I$ we will denote the set of subscripts $J=(j_1,\dots,
j_p)$ such that $1\leq j_1<\dots<j_p\leq 2p$. For every $J\in\mathcal I$ 
there exists unique $J'\in\mathcal I$ such that $J\cup J'=\{1,\dots,2p\}$.
We also denote $dz_J=dz_{j_1}\wedge\dots\wedge dz_{j_p}$
and $\varepsilon_J=\pm 1$ is defined in such a way that 
  $$dz_J\wedge dz_{J'}=\varepsilon_J\,dz_1\wedge\dots\wedge
     dz_{2p}.$$
Note that
\begin{equation}\label{eps}
\varepsilon_J\varepsilon_{J'}=(-1)^p.
\end{equation}

Every $(p,p)$-form $\alpha$ in $\mathbb C^{2p}$ we can associate with a
${N\times N}$-matrix $(a_{JK})$, where 
  $$N=\sharp\mathcal I=\frac{(2p)!}{(p!)^2},$$ 
by
\begin{equation}\label{mat}
\alpha=\sum_{J,K}a_{JK}idz_{j_1}\wedge d\bar z_{k_1}
     \wedge\dots\wedge idz_{j_p}\wedge d\bar z_{k_p}
     =i^{p^2}\sum_{J,K}a_{JK}dz_J\wedge d\bar z_K
\end{equation}
(note that $(-1)^{p(p-1)/2}i^p=i^{p^2}$). Then
\begin{equation}\label{sq}
\alpha^2=\sum_{J,K}\varepsilon_J\varepsilon_K a_{JK}a_{J'K'}\,dV
\end{equation}
and for $\gamma_1,\dots,\gamma_p\in(\mathbb C^{2p})^\ast$
  $$\alpha\wedge i\gamma_1\wedge\bar\gamma_1\wedge\dots\wedge 
     i\gamma_p\wedge\bar\gamma_p
     =\sum_{J,K}a_{JK}\,\gamma_1\wedge\dots\wedge\gamma_p\wedge dz_J
        \wedge\overline{(\gamma_1\wedge\dots\wedge\gamma_p\wedge dz_K)}.$$
Therefore $(a_{JK})$ has to be positive semi-definite on the image of 
Pl\"ucker embedding
\begin{equation}\label{pl}
((\mathbb C^{2p})^\ast)^p\ni(\gamma_1,\dots,\gamma_p)\longmapsto\left(\frac
     {\gamma_1\wedge\dots\wedge\gamma_p\wedge dz_J}{dz_1\wedge\dots\wedge
          dz_{2p}}\right)_{J\in
     \mathcal I}\in\mathbb C^{N}
\end{equation}
which is well known to be a variety in $\mathbb C^{N}$ (see e.g. \cite{H},
p. 64). 

We are now ready to prove Theorem \ref{thmp}:

\begin{proof}[Proof of Theorem \ref{thmp}]
First note that it is enough to show it for $p=3$. For if $\alpha$ is a 
$(3,3)$-form in $\mathbb C^6$ such that $\alpha\geq 0$ and $\alpha^2<0$
then for $p>3$ we set 
  $$\beta:=idz_7\wedge d\bar z_7+\dots+idz_{2p}\wedge d\bar z_{2p}.$$
We now have $\alpha\wedge\beta^{p-3}\geq 0$ but $(\alpha\wedge\beta^{p-3})^2=
\alpha^2\wedge\beta^{2p-6}<0$.

Set $p=3$, so that $N=20$, and order $\mathcal I=\{J_1,\dots,J_{20}\}$ 
lexicographically. Then the image of Pl\"ucker embedding is in particular 
contained in the quadric
\begin{equation}\label{quad}
z_1z_{20}-z_{10}z_{11}+z_5z_{16}-z_2z_{19}=0.
\end{equation}
For positive $a,\lambda,\mu$ to be determined later define
  $$\alpha:=\lambda i\big(dz_{J_1}\wedge d\bar z_{J_1}+dz_{J_{20}}
      \wedge d\bar z_{J_{20}}\big)
     +\mu i\!\!\!\!\!\sum_{\substack{k\in\{2,5,10,\\ \ \ 11,16,19\}}}
     dz_{J_k}\wedge d\bar z_{J_k}
     +a\big(dz_{J_1}\wedge d\bar z_{J_{20}}+
          dz_{J_{20}}\wedge d\bar z_{J_1}\big).$$
Then, similarly as in the proof of Proposition \ref{prop},
\begin{align*}
\bar zAz^T&=\lambda(|z_1|^2+|z_{20}|^2)
     +\mu(|z_2|^2+|z_5|^2+|z_{10}|^2+|z_{11}|^2+|z_{16}|^2+|z_{19}|^2)
      +2a\Re(\bar z_1z_{20})\\
  &\geq 2(\lambda-a)|z_1z_{20}|+2\mu|-z_{10}z_{11}+z_5z_{16}-z_2z_{19}|\\
  &=2(\lambda+\mu-a)|z_1z_{20}|
\end{align*}
if $z$ satisfies \eqref{quad}. Therefore $\alpha\geq 0$ if 
$a\leq\lambda+\mu$. 

On the other hand by \eqref{sq} and \eqref{eps}
  $$\alpha^2=2(\lambda^2+3\mu^2-a^2)dV.$$
We see that if we take $a=\lambda+\mu$ and $\lambda>\mu>0$ then
$\alpha\geq 0$ but $\alpha^2<0$.
\end{proof}

\end{document}